\newtheorem{thm}{Theorem}[section]
\newtheorem{lem}[thm]{Lemma}
\newtheorem{cor}[thm]{Corollary}
\newtheorem{prop}[thm]{Proposition}
\newtheorem{rem}[thm]{Remark}
\title{Yet another Proof of an old Hat}
\author{Roland Bacher}
\begin{document}
\maketitle

\begin{abstract} Every odd prime number $p$ can be written in
  exactly $(p+1)/2$ ways as a sum $ab+cd$ with $\min(a,b)>\max(c,d)$
  of two ordered products. This gives a new proof Fermat's Theorem
  expressing primes
  of the form $1+4\mathbb N$ as sums of two squares 
  \footnote{Keywords: Primes, sum of two squares,
 lattice.
 Math. class: Primary:
 11A41.
 Secondary: 11H06.
}.
\end{abstract}


\begin{thm}\label{thmmain} For every odd prime number $p$ there exist
  $(p+1)/2$ ordered quadruplets $(a,b,c,d)$ in $\mathbb N$
  such that $p=a\cdot b+c\cdot d$ and $\min(a,b)>\max(c,d)$.
\end{thm}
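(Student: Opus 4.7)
The plan is to reinterpret each admissible quadruple $(a,b,c,d)$ as the $2\times 2$ integer matrix
\[
M=\begin{pmatrix}a & c\\ -d & b\end{pmatrix},
\]
whose determinant is $ab+cd = p$. The columns $v_1 = (a,-d)$ and $v_2 = (c,b)$ form a $\mathbb{Z}$-basis of a sublattice $\Lambda\subset\mathbb{Z}^2$ of index $p$. The inequalities $a,b>c,d\ge 0$ place $v_1$ in the sector $\{(x,y):x>0,\ -x<y\le 0\}$ and $v_2$ in $\{(x,y):y>0,\ 0\le x<y\}$, so admissibility is a geometric condition on the chosen basis, augmented by the coupling inequalities $a>c$ and $b>d$.

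First I would prove that the map $(a,b,c,d)\mapsto \Lambda$ is injective. The sector conditions together with the coupling inequalities should pin down at most one admissible basis per sublattice by a short-vector/Gauss-reduction argument: in each sector one essentially picks the shortest vector of $\Lambda$ lying there, and the coupling inequalities fix the relative choice. This reduces the theorem to showing that exactly $(p+1)/2$ of the $p+1$ index-$p$ sublattices of $\mathbb{Z}^2$ (classified by $\mathbb{P}^1(\mathbb{F}_p)$) admit an admissible basis.

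To count these, I would exploit the visible involution $\sigma\colon(a,b,c,d)\mapsto(b,a,d,c)$, which preserves admissibility. Via the lattice correspondence $\sigma$ descends to an involution on $\mathbb{P}^1(\mathbb{F}_p)$ which small-case inspection identifies as $k\mapsto -1/k$. The admissible sublattices then form a union of $\sigma$-orbits. The orbit $\{0,\infty\}$ is always admissible and contributes $2$ (coming from the quadruples $(p,1,0,0)$ and $(1,p,0,0)$); when $p\equiv 1\pmod 4$, the two fixed points of $\sigma$ (the square roots of $-1$ in $\mathbb{F}_p$) split into one admissible and one non-admissible, the admissible one encoding the decomposition $p=a^2+c^2$ and contributing $1$.

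The main obstacle is to show that the non-trivial $\sigma$-orbits of size $2$ contribute exactly the right number of elements to reach $(p+1)/2$. I would attempt this either by producing a secondary involution that pairs non-admissible size-two orbits with admissible ones (so that in each such pair exactly one orbit is admissible), or by a direct parity argument using quadratic residues modulo $p$. Getting this orbit balance right is the crux of the proof; once it is established, the promised count $(p+1)/2$ follows, and as a bonus the fixed-point analysis yields Fermat's two-square theorem for $p\equiv 1\pmod 4$.
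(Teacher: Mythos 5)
You have rediscovered the paper's framework almost exactly: the same matrix-to-lattice correspondence (your $v_1=(a,-d)$, $v_2=(c,b)$ are the paper's $u=(a,c)$, $v=(-d,b)$ up to a reflection), the same reduction to deciding which of the $p+1$ index-$p$ sublattices, indexed by $\mathbb P^1(\mathbb F_p)$, admit an admissible basis, and a correct (though only sketched) uniqueness claim. But the proposal stops exactly where the real work begins, and on \emph{both} halves of the needed dichotomy. First, you never address existence: why should a lattice $\Lambda_\mu$ with $\mu\neq 0,\infty,\pm 1$ admit any basis at all with one vector in each of your two sectors and satisfying the coupling inequalities? In the paper this is Proposition \ref{propexistencebasis}, a genuine descent argument (shrink a centrally symmetric parallelogram until Lemma \ref{lemlatticebasis}, i.e.\ Pick's theorem, forces a basis whose four $\pm$-vectors meet all four sectors of one ``colour''). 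Second, the ``orbit balance'' that you correctly identify as the crux cannot be extracted from the involution $k\mapsto -1/k$: that involution is induced by $(a,b,c,d)\mapsto(b,a,d,c)$ and therefore \emph{preserves} admissibility, so it can never pair admissible lattices with non-admissible ones.

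The missing idea is a second, admissibility-\emph{reversing} involution: the reflection $(x,y)\mapsto(x,-y)$ carries $\Lambda_\mu$ to $\Lambda_{-\mu}$ and exchanges the two families of diagonal octants (``black'' and ``white'' in the paper). Hence if $\Lambda_\mu$ is admissible, its mirror $\Lambda_{-\mu}$ carries a basis of the opposite colour, and it cannot \emph{also} be admissible --- but this exclusivity is itself a theorem, proved in the paper via the non-crossing Lemma \ref{lemnoncrossing} (two bases of the same lattice cannot have intertwined spanning lines, so no lattice has monochromatic bases of both colours). Combined with existence, exactly one of $\Lambda_\mu,\Lambda_{-\mu}$ is admissible; since $\mu\mapsto -\mu$ acts on $\{2,\dots,p-2\}$ without fixed points, this yields $(p-3)/2$ solutions with $cd>0$, and the two solutions $(p,1,0,0)$, $(1,p,0,0)$ complete the count $(p+1)/2$. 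Your quadratic-residue parity alternative does not obviously lead anywhere, and your shortest-vector heuristic for uniqueness also needs care (the paper's uniqueness proof of the reduced basis, Proposition \ref{propmonochr}, ends with a nontrivial inequality chain $p=a'b'+c'd'>\dots>ab+cd=p$ rather than a pure shortest-vector argument). So the outline is the right one, but as it stands the proof has two genuine gaps: the existence proposition and the colour-reversing reflection.
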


As a consequence we obtain a new proof of the following result
discovered by an old rascal who did not want to spoil his margins
and left the proof to another chap who had no such qualms.

\begin{cor}\label{oldhat} Every prime number of the form $1+4\mathbb N$ is a sum
  of two squares.
\end{cor}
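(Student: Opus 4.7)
\begin{pf}
The approach is a Zagier-style involution argument on the set $S$ of ordered quadruplets supplied by Theorem~\ref{thmmain}; the key observation is that $(p+1)/2$ is odd precisely when $p \equiv 1 \pmod 4$.

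First, I would introduce the involution
\[
\iota : S \longrightarrow S, \qquad (a,b,c,d) \longmapsto (b,a,d,c).
\]
The equation $p = ab+cd$ and the constraint $\min(a,b) > \max(c,d)$ are both symmetric under swapping the two entries of either pair, so $\iota$ does preserve $S$; and clearly $\iota^2 = \mathrm{id}$, so $\iota$ is genuinely an involution on $S$.

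Next, I would identify its fixed set: a quadruplet is fixed by $\iota$ if and only if $a=b$ and $c=d$, in which case $p = a^2 + c^2$ with $a > c \geq 0$. Since $p$ is prime, $c=0$ is ruled out (it would force $p = a^2$), so any fixed point of $\iota$ produces integers $a > c \geq 1$ with $p = a^2 + c^2$, i.e.\ a representation of $p$ as a sum of two positive squares.

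Finally, I would apply the parity principle: as $\iota$ pairs up every non-fixed element of $S$, the cardinalities $|S|$ and $|\mathrm{Fix}(\iota)|$ have the same parity. For $p \in 1 + 4\mathbb{N}$, Theorem~\ref{thmmain} gives $|S| = (p+1)/2$, which is odd; hence $\mathrm{Fix}(\iota) \neq \emptyset$, and the corollary follows. The only real obstacle, namely supplying a set whose cardinality has controlled parity, has already been absorbed into Theorem~\ref{thmmain}; once that theorem is granted, the only remaining check is the symmetry of the constraints defining $S$ under $\iota$, which is immediate.
\end{pf}
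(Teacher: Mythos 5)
Your proof is correct and is essentially identical to the paper's: the same involution $(a,b,c,d)\mapsto(b,a,d,c)$ on the solution set of Theorem~\ref{thmmain}, the same parity argument using the oddness of $(p+1)/2$ when $p\equiv 1\pmod 4$, and the same reading-off of $p=a^2+c^2$ from a fixed point. The extra remark ruling out $c=0$ is a harmless refinement the paper leaves implicit.
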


\begin{proof}[Proof of Corollary \ref{oldhat}] If $p$ is a prime-number congruent to
  $1\pmod 4$, the number $(p+1)/2$ of solutions $(a,b,c,d)$ defined by
  Theorem \ref{thmmain} is odd. The involution $(a,b,c,d)\longmapsto
  (b,a,d,c)$ has thus a fixed point $(a,a,c,c)$ expressing $p$ as a sum of two squares.
  \end{proof}

  Corollary \ref{oldhat} has of course already quite a few proofs.
  Some are described in the entry ``Fermat's theorem on sums of two squares'' of
  \cite{WikipFerm}.  
  The author enjoyed also the account given in \cite{Els}.

  The set $\mathcal S_p$ of solutions defined by Theorem \ref{thmmain}
  is invariant under the action of Klein's Vierergruppe with
  non-trivial elements acting by
  $$(a,b,c,d)\longmapsto (b,a,c,d),(a,b,d,c),(b,a,d,c)\ .$$
  The following tables list all elements $(a,b,c,d)$ with
  $a,b,c,d$ decreasing together with the size $\sharp(\mathcal O)$
  of the corresponding orbit under Klein's Vierergruppe
  for the sets
  $\mathcal S_{29}$ and $\mathcal S_{31}$:
  $$\begin{array}{cc}
      \begin{array}{cccc|r}
        a&b&c&d&\sharp(\mathcal O)\\
        \hline
29&1&0&0&2\\
14&2&1&1&2\\
7&4&1&1&2\\
9&3&2&1&4\\
5&5&4&1&2\\
5&5&2&2&1\\
5&4&3&3&2\\
\hline
&&&&15
      \end{array}\qquad&
       \begin{array}{cccc|r}
        a&b&c&d&\sharp(\mathcal O)\\
        \hline
31&1&0&0&2\\
15&2&1&1&2\\
10&3&1&1&2\\
6&5&1&1&2\\
7&4&3&1&4\\
9&3&2&2&2\\
5&5&3&2&2\\
\hline
&&&&16
       \end{array}
    \end{array}
    $$
    Establishing complete lists $\mathcal S_p$ of solutions
    for small primes is rather pleasant and rates among the author's more
    confessable procrastinations.
    
    We proceed now by giving an elementary proof of
    Theorem \ref{thmmain}.

    A last Section contains a few 
    remarks and ends with a somewhat speculative part.

\section{Proof of Theorem \ref{thmmain}}
  
We state the following reformulation of Pick's Theorem\footnote{Pick's theorem gives the area $\frac{1}{2}b+i-1$ of a closed lattice polygon $P$
    (with vertices in $\mathbb Z^2$)
    containing $b$ lattice points $\partial P\cap \mathbb Z^2$
    in its boundary and $i$ lattice points in its interior.}:

  \begin{lem}\label{lemlatticebasis} Two linearly independent
    elements $u,v$ of a
    $2$-dimensional lattice $\Lambda$ 
    form a basis of the lattice $\Lambda$
    if and only if the triangle with vertices $(0,0),u,v$
    contains no other elements of $\Lambda$.
\end{lem}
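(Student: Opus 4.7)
The statement is really a repackaging of Pick's theorem, and I would treat the two implications separately.

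For the forward direction, suppose $u,v$ form a basis of $\Lambda$. Any $w\in\Lambda$ lying in the closed triangle with vertices $0,u,v$ can be written uniquely as $w=\alpha u+\beta v$ with $\alpha,\beta\in\mathbb Z$, and the triangle condition forces $\alpha,\beta\geq 0$ and $\alpha+\beta\leq 1$. The only integer solutions are $(\alpha,\beta)\in\{(0,0),(1,0),(0,1)\}$, which are exactly the three vertices, so no other element of $\Lambda$ lies in the triangle.

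For the converse, I would fix any basis of $\Lambda$ to identify $\Lambda$ with $\mathbb Z^2$, and view $u,v$ as integer vectors in these coordinates. The hypothesis then translates to: the closed triangle $T$ with vertices $0,u,v$ meets $\mathbb Z^2$ only in these three points. In particular $T$ has $i=0$ interior lattice points, and any lattice point lying on an edge strictly between two vertices would contradict the hypothesis, so $T$ has exactly $b=3$ boundary lattice points. Pick's theorem, as recalled in the footnote, gives $\mathrm{area}(T)=\tfrac12\cdot 3+0-1=\tfrac12$. Combined with $\mathrm{area}(T)=\tfrac12|\det(u,v)|$ in these coordinates, this yields $|\det(u,v)|=1$, meaning that $u,v$ generate a sublattice of $\mathbb Z^2$ of index $1$, i.e.\ a basis of $\Lambda$.

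The reduction to $\mathbb Z^2$ via an auxiliary basis is legitimate because the hypothesis is purely about incidences of lattice points, a property preserved by any lattice isomorphism. Once this is granted, no real obstacle remains: the lemma is essentially a one-line appeal to Pick's theorem, which is the only nontrivial input.
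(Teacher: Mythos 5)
Your proof is correct and follows essentially the same route as the paper, which simply notes that the lemma is ``an easy corollary of Pick's Theorem''; your write-up just supplies the details (the elementary forward direction and the $b=3$, $i=0$, $\mathrm{area}=\tfrac12$ computation for the converse). The paper additionally sketches a second, Pick-free argument via the parallelogram $(0,0),u,v,u+v$ being a fundamental domain of $\mathbb Z u+\mathbb Z v$, but your version is a faithful elaboration of its primary approach.
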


\begin{proof} This is an easy corollary of Pick's Theorem.

  It follows also from the observation that the parallelogram with vertices
  $(0,0),u,v,u+v$ is a fundamental domain of the sub-lattice $\mathbb Zu+\mathbb Z v$ of $\Lambda$ spanned by $u$ and $v$.
\end{proof}

\begin{lem}\label{lemnoncrossing} If $f_1,f_2$ and $g_1,g_2$ are two bases of a $2$-dimensional 
  lattice $\Lambda=\mathbb Z f_1+\mathbb Z f_2=\mathbb Z g_1+\mathbb Z g_2$
  such that $\{\pm f_1,\pm f_2\}$ and $\{\pm g_1,\pm g_2\}$ do not intersect,
  then $\{\pm g_1,\pm g_2\}$ is contained
  in a two opposite connected components
  of $\mathbb R^2\setminus(\mathbb R f_1\cup \mathbb R f_2)$.
\end{lem}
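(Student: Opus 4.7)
The plan is to express $g_1$ and $g_2$ in the basis $(f_1,f_2)$ and then run a short sign argument using the unimodularity of the change of basis. Write $g_1=af_1+bf_2$ and $g_2=cf_1+df_2$ with $a,b,c,d\in\mathbb Z$; since $(g_1,g_2)$ is also a basis of $\Lambda$, one has $|ad-bc|=1$. The four open connected components of $\mathbb R^2\setminus(\mathbb R f_1\cup\mathbb R f_2)$ correspond to the four possible sign patterns $(\mathrm{sgn}(x),\mathrm{sgn}(y))$ of a vector $xf_1+yf_2$, and two such components are opposite exactly when their sign patterns are negatives of one another. The assertion that $\{\pm g_1,\pm g_2\}$ fits into a pair of opposite components is therefore equivalent to the identity $\mathrm{sgn}(ab)=\mathrm{sgn}(cd)$.

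The first step is to show that $a,b,c,d$ are all nonzero. If, for instance, $b=0$, then $g_1=af_1$ lies on $\mathbb R f_1$ with $a\in\mathbb Z$, and the identity $1=|\det(g_1,g_2)|=|a|\cdot|\det(f_1,g_2)|$ forces $a=\pm 1$, i.e.\ $g_1=\pm f_1$, contradicting the disjointness hypothesis $\{\pm f_1,\pm f_2\}\cap\{\pm g_1,\pm g_2\}=\emptyset$. The three other cases are symmetric.

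The heart of the proof is then a one-line sign computation. Suppose for contradiction that $\mathrm{sgn}(ab)\neq\mathrm{sgn}(cd)$; equivalently $\mathrm{sgn}(abcd)=-1$, i.e.\ $\mathrm{sgn}(ad)=-\mathrm{sgn}(bc)$. Then $ad$ and $-bc$ are nonzero integers of the same sign, so
$$|ad-bc|=|ad|+|bc|\ge 1+1=2,$$
contradicting $|ad-bc|=1$.

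I do not foresee any serious obstacle: everything reduces to elementary sign bookkeeping once coordinates are introduced. Note that Lemma \ref{lemlatticebasis} is not used directly in this plan; one could alternatively phrase the contradiction geometrically by producing an interior lattice point of the triangle with vertices $0,g_1,g_2$ whenever $g_1,g_2$ land in adjacent sectors, but the determinant argument above is shorter.
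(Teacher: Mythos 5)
Your proof is correct, but it takes a genuinely different route from the paper's. The paper argues in the opposite direction: assuming the conclusion fails, it writes $f_1=\alpha g_1+\beta g_2$ and $f_2=\gamma g_1-\delta g_2$ with $\alpha,\beta,\gamma,\delta$ strictly positive (after sign changes and swapping roles), observes that $g_1$ then lies on the segment joining $\frac{1}{\alpha}f_1$ to $\frac{1}{\gamma}f_2$ inside the triangle with vertices $(0,0),f_1,f_2$, and invokes Lemma \ref{lemlatticebasis} to reach a contradiction. You instead put coordinates on everything: expressing $g_1=af_1+bf_2$, $g_2=cf_1+df_2$, you first rule out $abcd=0$ (else some $g_i=\pm f_j$ by unimodularity), and then note that if the sign patterns of $(a,b)$ and $(c,d)$ were neither equal nor opposite, $ad$ and $-bc$ would be nonzero of the same sign, forcing $|ad-bc|\ge 2$ and contradicting $|ad-bc|=1$. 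Your version is more elementary and self-contained --- it bypasses Lemma \ref{lemlatticebasis} entirely and avoids the ``up to sign changes and exchanging roles'' normalization --- whereas the paper's version stays inside the geometric language (triangles, convex hulls, the Pick-type criterion) that it reuses throughout the later arguments. One cosmetic point: your identity $1=|\det(g_1,g_2)|$ should be understood as the determinant of the coordinate matrix of $(g_1,g_2)$ relative to the basis $(f_1,f_2)$; the Euclidean determinant equals the covolume of $\Lambda$ rather than $1$ in general. Since your argument only uses the ratio of determinants (equivalently, $|ad-bc|=1$), nothing is affected.
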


Lemma \ref{lemnoncrossing} can be remembered easily:
The lines $\mathbb Rf_1,\mathbb R f_2$ and $\mathbb R g_1,\mathbb R g_2$
defined by two generating sets $f_1,f_2$ and $g_1,g_2$ of a two-dimensional
lattice are never intertwined.

\begin{proof} Up to sign-changes and up to exchanging the roles of $f_1$ and
  $f_2$ we have otherwise $f_1=\alpha g_1+\beta g_2$
  and $f_2=\gamma g_1-\delta g_2$ where $\alpha,\beta,\gamma,\delta$ are
  strictly positive integers. This implies that $g_1$ belongs to the
  segment joining $\frac{1}{\alpha}f_1$ to $\frac{1}{\gamma}f_2$
contained in the convex hull of
  $(0,0),f_1,f_2$. The assumption $g_1\not\in\{f_1,f_2\}$ shows that
  this contradicts Lemma \ref{lemlatticebasis}.
\end{proof}

We consider the eight open cones
  of $\mathbb R^2$ forming the complement of the four lines defined by
  $xy(x^2-y^2)=0$. We call these eight open cones \emph{windmill-cones}
  (mainly as an attempt to turn the content of this paper into a piece of
  loftier mathematics)
  and we colour them alternately black
  and white, starting with a black E-NE windmill-cone
  $\{(x,y)\ \vert\ 0<y<x\}$ (using the conventions
  of wind-roses).

  A sub-lattice $M$ of finite index in $\mathbb Z^2$ has
  a \emph{black (respectively white) monochromatic basis} if
  it is generated by two elements 
  $b_1,b_2$ such that the set $\{\pm b_1,\pm b_2\}$ intersects all four
  open black (respectively white) windmill-cones. 
  
\begin{lem}\label{lemonecolour} All monochromatic bases of a lattice have
  the same colour.
\end{lem}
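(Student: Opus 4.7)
The plan is to derive a contradiction from the hypothetical coexistence of a black monochromatic basis $(b_1,b_2)$ and a white monochromatic basis $(g_1,g_2)$ of the same lattice $M$. The key geometric observation I would establish first is that the two lines $\mathbb{R} b_1$ and $\mathbb{R} b_2$ cut the plane into four open components, each of which contains exactly one of the four white windmill-cones in its entirety. Indeed, by black-monochromaticity the set $\{\pm b_1,\pm b_2\}$ meets each of the four black cones; linear independence of $b_1,b_2$ makes the four half-lines $\mathbb{R}_{>0}(\pm b_i)$ pairwise distinct, so by pigeonhole exactly one of them lies in each black cone. Since black and white cones alternate around the origin, each component of $\mathbb{R}^2\setminus(\mathbb{R} b_1\cup\mathbb{R} b_2)$ consists of a tail of some black cone, one complete white cone, and a tail of the next black cone.

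Next I would apply Lemma \ref{lemnoncrossing}. A preliminary remark rules out the case $\{\pm b_1,\pm b_2\}\cap\{\pm g_1,\pm g_2\}\neq\emptyset$, since a common vector would simultaneously lie in a black cone and in a white cone. Lemma \ref{lemnoncrossing} then places $\{\pm g_1,\pm g_2\}$ inside two opposite components of $\mathbb{R}^2\setminus(\mathbb{R} b_1\cup\mathbb{R} b_2)$; equivalently, a single component $C$ contains one representative from each of the antipodal pairs $\{\pm g_1\}$ and $\{\pm g_2\}$, and by white-monochromaticity both these representatives sit inside white windmill-cones.

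The contradiction is then immediate: by the first observation $C$ contains only one white windmill-cone, so the two representatives lie in that single cone; hence the antipodal pairs $\{\pm g_1\}$ and $\{\pm g_2\}$ pass through the same pair of opposite white cones, so $\{\pm g_1,\pm g_2\}$ misses two of the four white cones, contradicting white-monochromaticity. The only non-routine ingredient is the alternation argument identifying the four components of $\mathbb{R}^2\setminus(\mathbb{R} b_1\cup\mathbb{R} b_2)$ with the four white windmill-cones; once this geometric picture is in place, the remainder reduces to invoking Lemma \ref{lemnoncrossing} and the antipodal symmetry of the windmill-cone decomposition.
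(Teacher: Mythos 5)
Your proof is correct and follows the same route as the paper: the key ingredient in both is Lemma \ref{lemnoncrossing} combined with the alternation of black and white windmill-cones. The paper phrases this in one line (the four lines $\mathbb R b_1,\mathbb R b_2,\mathbb R w_1,\mathbb R w_2$ would be intertwined, contradicting Lemma \ref{lemnoncrossing}), whereas you carefully unpack the same geometry by showing each component of $\mathbb R^2\setminus(\mathbb R b_1\cup\mathbb R b_2)$ meets only one white cone; the extra detail is welcome but the argument is essentially identical.
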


\begin{proof} If $b_1,b_2$, respectively $w_1,w_2$, is a black, respectively
  white, monochromatic basis of
  $\mathbb Z b_1+\mathbb Zb_2=\mathbb Z w_1+\mathbb Z w_2$ then the four
  lines defined by $\mathbb R b_1,\mathbb R b_2$ and
  $\mathbb R w_1,\mathbb R w_2$
  are intertwined in contradiction with Lemma \ref{lemnoncrossing}.
\end{proof}

An odd prime-number $p$ and an integer $\mu$ define a sub-lattice 
\begin{align}\label{deflambdamu}
  \Lambda_\mu(p)=\{(x,y)\in\mathbb Z,\ \vert\ x+\mu y\equiv 0\pmod p\}
\end{align}
of index $p$ in $\mathbb Z^2$.

\begin{prop}\label{propexistencebasis} Every lattice $\Lambda_\mu(p)$
  with $2\leq \mu\leq p-2$ has a monochromatic basis.
\end{prop}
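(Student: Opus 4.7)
My plan is to construct a monochromatic basis directly from the continued fraction expansion of $\mu/p$, using the classical fact that consecutive convergents produce lattice bases of $\Lambda_\mu(p)$. I would first reduce to the case $2 \le \mu \le (p-1)/2$: the involution $(x, y) \mapsto (x, -y)$ is an isomorphism $\Lambda_\mu(p) \to \Lambda_{p-\mu}(p)$ that negates $xy$ while preserving $x^2 - y^2$, hence swaps the two colours of windmill cones, so a monochromatic basis of one lattice transports to one (of the opposite colour) of the other.

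Then I would expand $\mu/p = [0; a_1, a_2, \ldots, a_n]$ with convergents $p_k/q_k$ and introduce the lattice vectors $v_k = (p p_k - \mu q_k,\; q_k) \in \Lambda_\mu(p)$. The standard identity $p_k q_{k+1} - p_{k+1} q_k = (-1)^{k+1}$ gives $|\det(v_k, v_{k+1})| = p$, so every consecutive pair $(v_k, v_{k+1})$ is automatically a basis of $\Lambda_\mu(p)$.

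Next I would track how $(v_k)$ moves through the cones. The sign of $x_k = p p_k - \mu q_k$ alternates with the parity of $k$ (even convergents lie below $\mu/p$, odd ones above), while the ratio $|x_k|/|y_k| = |p p_k/q_k - \mu|$ decreases strictly from $\mu \ge 2$ at $k = 0$ to $0$ at $k = n$. Since no lattice vector on $y = \pm x$ has norm below $p\sqrt{2}$ when $2 \le \mu \le p-2$, the equality $|x_k| = |y_k|$ cannot occur for $k < n$, and there is a unique transition index $t$ with $|x_t| > |y_t|$ and $|x_{t+1}| < |y_{t+1}|$. The restriction $\mu \le (p-1)/2$ forces $a_1 \ge 2$, hence $q_{n-1} \ge 2$; combined with the Bezout identity $|p p_{n-1} - \mu q_{n-1}| = 1$ this gives $|x_{n-1}| < |y_{n-1}|$, so $t \le n-2$ and in particular $v_{t+1} \ne (0, p) = v_n$. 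Both $v_t$ and $v_{t+1}$ therefore lie in open windmill cones.

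Finally, writing the colour of a vector as the sign of $xy(x^2 - y^2)$, a short case check on the parity of $t$ shows that $v_t$ and $v_{t+1}$ always share a colour (the alternation of the sign of $xy$ cancels the jump in the sign of $x^2 - y^2$ at the transition), while lying on opposite sides of the diagonal $|x| = |y|$; the set $\{\pm v_t, \pm v_{t+1}\}$ then hits both antipodal pairs of cones of that colour, so $(v_t, v_{t+1})$ is the desired monochromatic basis. The main obstacle I foresee is controlling the transition and ruling out the degeneracy $v_{t+1} = (0, p)$; the reduction to $\mu \le (p-1)/2$ in the first step is designed precisely to sidestep this.
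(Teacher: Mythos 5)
Your proposal is correct, and it takes a genuinely different route from the paper. The paper's proof is a soft geometric descent: it first shows (via invertibility of $1\pm\mu$ modulo $p$) that $\Lambda_\mu(p)$ meets the four lines $xy(x^2-y^2)=0$ only in $(0,0)$ and points of norm at least $p$, deduces from its Lemma on lattice bases that every open windmill-cone contains a short lattice point, and then shrinks a parallelogram spanned by points in two black cones until it is empty, switching to white if the black attempt degenerates. You replace the descent by the continued fraction of $\mu/p$: the vectors $v_k=(pp_k-\mu q_k,\,q_k)$ form an explicit chain of bases (each consecutive pair has determinant $\pm p$), and the pair straddling the diagonal $|x|=|y|$ is monochromatic --- I checked the sign bookkeeping, and the colour of $v_t$ and $v_{t+1}$ is indeed $(-1)^{t+1}$ in both cases, with $\{\pm v_t,\pm v_{t+1}\}$ hitting the two antipodal pairs of cones of that colour. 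Your reduction to $\mu\le(p-1)/2$ via $(x,y)\mapsto(x,-y)$ cleanly forces $a_1\ge 2$, hence $q_{n-1}\ge 2$ and $|x_{n-1}|=1<|y_{n-1}|$, which rules out the degenerate endpoint $v_{t+1}=(0,p)$. What your approach buys is constructiveness: it computes the monochromatic basis by the Euclidean algorithm and reads off the colour of $\Lambda_\mu(p)$ from the parity of the transition index, which essentially implements the Gaussian-reduction recipe the paper only sketches in its Section 2.1. What it still borrows from the paper's circle of ideas is the one arithmetic fact that no $(\pm m,\pm m)$ with $1\le m\le p-1$ lies in $\Lambda_\mu(p)$ (needed to ensure a clean transition); in a final write-up you should include its one-line proof and note explicitly that $x_k\ne 0$ and $q_k<p$ for $k<n$, so that $v_t$ and $v_{t+1}$ lie in open cones.
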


\begin{proof} $\Lambda_\mu(p)$ contains obviously no elements of the form
  $(\pm m,0)$ or $(\pm m,\pm p)$ with $m$ in $\{1,2,\ldots,p-1\}$.
  Since $p$ is prime, $\Lambda_\mu(p)$ contains no elements of
  the form $(0,\pm m),(\pm p,\pm m)$ with $m$ in $\{1,\ldots,p-1\}$.
  Moreover, for $\mu$ in $\{2,\ldots,p-2\}$ considered as a subset of the finite
  field $\mathbb Z/p\mathbb Z$, the elements $1+\mu$ and $1-\mu$ are invertible in $\mathbb Z/p\mathbb Z$. This implies that $\Lambda_\mu(p)$ contains also
  no elements of the form $(\pm m,\pm m)$ with $m$ in $\{1,\ldots,p-1\}$.
  The intersection of a (black or white) windmill-cone with $[-p,p]^2$
  defines thus a triangle of area $p^2/2$ whose boundary contains no
  lattice-points of $\Lambda_\mu(p)$ except for its three vertices.
  Lemma \ref{lemlatticebasis} implies now that
  every open (black or white) windmill-cone contains a non-zero element $(x,y)$
  of $\Lambda_\mu(p)$
  with coordinates $x,y$ in $\{\pm 1,\pm 2,\ldots,\pm(p-1)\}$. Let $b_1$ be such a point
  in the black E-NE windmill-cone and let $b_2$ be such a point in the
  black N-NW windmill-cone. Let $\mathcal Q$ be the parallelogram
  with vertices $\pm b_1,\pm b_2$. If the interior of $\mathcal Q$
  contains a non-zero element $\tilde b$
  of $\Lambda_\mu(p)$ in a black windmill-cone,
  replacing $b_1$ or $b_2$ by $\pm \tilde b$ yields 
  a smaller parallelogram $\mathcal Q'$ strictly contained in $\mathcal Q$.
  Iterating this construction leads
  finally to a parallelogram $\tilde Q$ such that
  $\tilde Q\cap \Lambda_\mu(p)$
  intersects open black windmill-cones only in vertices.
  If $\tilde Q$ contains no elements of $\Lambda_\mu(p)$ in
  open white windmill-cones
  we get a black monochromatic basis by Lemma \ref{lemlatticebasis}
  Otherwise $\pm b_1,\pm b_2$ generate a strict sub-lattice of
  $\Lambda_\mu(p)$ and Lemma \ref{lemlatticebasis}
  implies that $\tilde Q\cap \Lambda_\mu(p)$
  intersects all four white windmill-cones in
  non-zero elements $\pm w_1,\pm w_2$ of $\Lambda_\mu(p)$. Switching colours and
  restarting the previous construction with the parallelogram
  spanned by $\pm w_1,\pm w_2$ ends the proof.
  \end{proof}

We call a black monochromatic basis $u,v$ of a lattice $\Lambda_\mu(p)$
(with $\mu$ in $\{2,\ldots,p-2\}$) \emph{reduced} if $u=(a,c),v=(-d,b)$
with $a,b,c,d\in\mathbb N$ such that $\min(a,b)>\max(c,d)$.

\begin{lem}\label{lemreddefdbyone} A reduced black monochromatic basis is uniquely
  defined by one of its elements.
\end{lem}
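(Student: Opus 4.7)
The plan is to fix a reduced black monochromatic basis $u=(a,c),v=(-d,b)$ of some $\Lambda_\mu(p)$ and to show that no other vector $v'$ can complete $u$ into a reduced black monochromatic basis; the analogous statement (that $v$ determines $u$) then follows by the obvious symmetric argument with the roles of the two basis vectors interchanged.

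First I would pin down the ambient lattice from $u$ alone. Because each vector of a monochromatic basis must sit in an \emph{open} windmill-cone, both off-diagonal coordinates $c$ and $d$ are strictly positive and hence invertible modulo~$p$. The congruence $a+\mu c\equiv 0\pmod p$ therefore recovers $\mu$ from $u$, so any candidate $v'$ is forced to belong to $\mathbb Z u+\mathbb Z v=\Lambda_\mu(p)$, and the two hypothesised bases span the very same lattice.

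Second, since $(u,v)$ and $(u,v')$ are two bases of $\Lambda_\mu(p)$ sharing the vector $u$, the change-of-basis matrix expressing $(u,v')$ in $(u,v)$ has first column $(1,0)^{t}$ and determinant $\pm 1$, forcing $v'=\gamma u\pm v$ for some $\gamma\in\mathbb Z$. The minus sign is ruled out at once: writing $v'=(-d',b')$ with $d',b'>0$, the equality $v'=\gamma u-v$ would require $\gamma a+d<0$ (so $\gamma\leq -1$) and simultaneously $\gamma c-b>0$ (so $\gamma\geq 1$), which is impossible. Hence $v'=v+\gamma u=(\gamma a-d,\,\gamma c+b)$, i.e.\ $d'=d-\gamma a$ and $b'=b+\gamma c$.

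Finally I would invoke the reduced-basis inequality $\min(a,b')>\max(c,d')$ to kill the integer $\gamma$. If $\gamma\geq 1$, then $d'=d-\gamma a\leq d-a<0$ (using $a>d$), contradicting $d'\in\mathbb N$; if $\gamma\leq -1$, then $d'\geq d+a>a\geq\min(a,b')$, violating the reducedness of $v'$. Thus $\gamma=0$ and $v'=v$. The main obstacle I anticipate is the clean disposal of the $\pm$-sign ambiguity in the change-of-basis matrix, but once that case-check is handled the remainder reduces to two one-line inequality arguments powered only by $a>d$ and the positivity of $c$.
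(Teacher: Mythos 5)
Your proof is correct and is essentially the paper's argument in algebraic rather than geometric dress: writing any competing completion as $v'=\gamma u\pm v$ is exactly the paper's observation that $v$ must lie on the nearest lattice line parallel to $\mathbb Ru$, and your case analysis on the sign and on $\gamma$ (using $a>d$ and the reducedness of the second basis) reproduces the paper's characterisation of $v$ as the rightmost lattice point of that line inside the open black N-NW windmill-cone. The preliminary recovery of $\mu$ from $u$ alone is harmless but not needed, since the lemma is only applied to bases of one fixed lattice.
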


\begin{proof} Let $u=(a,c),v=(-d,b)$ be a reduced black monochromatic basis
  of $\Lambda=\mathbb Z u+\mathbb Z v$. The element $v$ belongs
  necessarily to one of the two closest affine lines parallel to $\mathbb Ru$
  which intersect $\Lambda$. Since $v$ belongs to the open black N-NW
  windmill-cone, $v$ belongs to the closest line $L_+$
  intersecting $\Lambda$
  which is parallel to $\mathbb R u$ and strictly above $\mathbb R u$.
  Reducedness of the basis $u,v$ shows that $v$ is the rightmost element
  of the intersection of $L_+\cap \Lambda$ with the open black
  N-NW windmill-cone.

  An analogous argument shows that $v$
  determines $u$ uniquely.
\end{proof}

\begin{prop}\label{propmonochr} Given an odd prime number $p$, a lattice $\Lambda_\mu(p)$ 
with $\mu$ in $\{2,\ldots,p-2\}$ has either only white monochromatic bases
or it has a unique reduced black monochromatic basis.
\end{prop}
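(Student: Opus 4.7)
The plan is to combine Lemma \ref{lemonecolour} with Proposition \ref{propexistencebasis} so that every monochromatic basis of $\Lambda_\mu(p)$ shares a colour, and then split on whether that colour is white (first alternative) or black. In the black case one must produce a unique reduced representative, which I would tackle as existence followed by uniqueness.

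For existence, I would start from an arbitrary black monochromatic basis $(u,v)=((a,c),(-d,b))$ and remove the obstructions $a\le d$ or $b\le c$ by a Euclidean-style replacement. Equality $a=d$ is impossible because $u+v$ would then be a nonzero lattice point on the $y$-axis, forcing $p\mid b+c$ and hence $a=1$ via $p=a(b+c)$, contradicting $a>c\ge 1$; a similar argument excludes $b=c$. When $a<d$ the replacement $v\mapsto v+\lfloor d/a\rfloor u$ yields a new black basis in which $d$ is replaced by $d\bmod a<a$, and the four reducedness inequalities are then satisfied; the case $b<c$ is handled symmetrically.

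For uniqueness, suppose $(u_1,v_1)$ and $(u_2,v_2)$ are two reduced black monochromatic bases and write $u_2=su_1+tv_1$, $v_2=s'u_1+t'v_1$. A direct computation gives the identity $a_2b_2+c_2d_2=(st'-s't)(a_1b_1+c_1d_1)$, so $st'-s't=1$. Lemma \ref{lemnoncrossing} combined with the cone conditions on $u_2$ and $v_2$ leaves only two possible sign patterns for the entries of the matrix: either (A) $s,t,s',t'\ge 1$, or (D) $s,t'\ge 1$ and $t,s'\le -1$. In case (A), the conditions $u_2\in$\,E-NE and $v_2\in$\,N-NW read $s(a_1-c_1)>t(b_1+d_1)$ and $t'd_1>s'a_1$ respectively; clearing denominators and substituting $st'=s't+1$ collapses the combined inequality to
\[ s'(sc_1+tb_1)=s'c_2<d_1, \]
which contradicts $c_2\ge c_1+b_1>b_1>d_1$ (the last step using reducedness of $(u_1,v_1)$).

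Case (D) requires no new work: the inverse change of basis $u_1=t'u_2+|t|v_2$, $v_1=|s'|u_2+sv_2$ has all four coefficients positive, so it is case (A) applied to the swapped pair $((u_2,v_2),(u_1,v_1))$. The same argument now yields $|s'|c_1<d_2$, which is contradicted by $c_1=t'c_2+|t|b_2\ge c_2+b_2>d_2$ (by reducedness of $(u_2,v_2)$). The main obstacle I expect is recognising the swap trick that reduces case (D) to case (A); a direct attack on (D) leads to a noticeably messier inequality manipulation. Once both cases are eliminated, $u_1=u_2$ and Lemma \ref{lemreddefdbyone} supplies $v_1=v_2$.
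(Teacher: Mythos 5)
Your proof is correct, and while its skeleton (colour dichotomy via Lemma \ref{lemonecolour} and Proposition \ref{propexistencebasis}, existence by shearing, uniqueness by placing one basis in the positive cone of the other via Lemma \ref{lemnoncrossing} plus the swap trick for your case (D)) matches the paper's, the way you kill the nested-cone case is genuinely different and in fact cleaner. The paper argues geometrically: reducedness of the outer basis forces $u+v$ out of the N-NW cone, the convex-hull trick with Lemma \ref{lemlatticebasis} excludes the N-NE alternative, and slope estimates then give $\delta\geq 3$ and $a'\geq a/2$, ending in the chain $p>\dots>p$. You instead exploit $st'-s't=1$ directly: from $t'd_1>s'a_1$ and $c_2<a_2$ you get $s'c_2<d_1$, which collides with $c_2\geq c_1+b_1>d_1$. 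This needs only the cone conditions (not even reducedness of the outer basis), so it actually proves the stronger statement that two black monochromatic bases with disjoint $\pm$-sets cannot be in nested-cone position at all, and it avoids the paper's $\delta\geq 3$ and $a'\geq a/2$ estimates entirely; the price is that it is pure coefficient manipulation rather than lattice geometry. Two small points you should tighten: in the existence step, $d\bmod a=0$ (i.e.\ $a\mid d$ with $d>a$) must be excluded by the same argument you give for $a=d$, since otherwise the sheared $v$ lands on the $y$-axis and leaves the open cone; and in the uniqueness step you should say explicitly that when Lemma \ref{lemnoncrossing} is inapplicable because the $\pm$-sets meet, the cone constraints force $u_1=u_2$ or $v_1=v_2$ outright, whereupon Lemma \ref{lemreddefdbyone} finishes. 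Neither point affects the substance.
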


\begin{proof} Proposition \ref{propexistencebasis} shows that
  such a lattice contains either black or white monochromatic bases.
  They are either all black or all white
  by Lemma \ref{lemonecolour}.
  We can thus assume that $\Lambda_\mu(p)$ has only black monochromatic
  bases. We chose such a basis with $u$ and $v$ respectively in the open
  black E-NE and N-NW windmill-cone. Replacing $u$ if necessary with $u-sv$
  we can assume that $u$ is the lowest element of the open E-NE windmill
  cone which belongs to $u+\mathbb Rv\cap \Lambda_\mu(p)$. Replacing
  similarly $v$ with $v+tu$ we can similarly assume that $v$ is
  the rightmost element of the open N-NW windmill-cone which belongs
  to $v+\mathbb Ru\cap \Lambda_\mu(p)$.

  The set $u=(a,c),v=(-d,b)$ is clearly still a black monochromatic basis of
  $\Lambda_\mu(p)$. We claim that $u,v$ is reduced. Observe first
  that the inclusion of $u$ in the open black E-NE windmill-cone implies
  $0<c<a$. The inclusion of $v$ in the open black N-NW windmill-cone
  implies similarly $0<d<b$.
  Since $\Lambda_\mu(p)\cap \mathbb R(1,0)=\mathbb Z(p,0)$, we have
  either $u-v=(p,0)$ which leads to the contradiction
  $\Lambda_\mu(p)=\mathbb Z(p,0)+\mathbb Z(1,0)$ or the vector $u-v=(a+d,c-b)$
  belongs to the lower half-plane and we have $b>c$.
  An analogous argument shows that $v+u$ belongs to the
  half-plane $\{(x,y),x>0\}$. This implies the inequality $a>d$.
  We have thus a black
  basis $u=(a,c),v=(-d,b)$ with $a,b,c,d$ in $\mathbb N$ such that $\min(a,b)>\max(b,c)$. This shows that $u,v$ is a reduced black monochromatic basis.

  Assume now that $\Lambda_\mu(p)$ has two reduced black
  monochromatic bases
  $u=(a,c),v=(-d,b)$ and $u'=(a',c'),v'=(-d',b')$.
  Lemma \ref{lemreddefdbyone} shows that $\mathbb Ru,\mathbb R v$ and
  $\mathbb Ru',\mathbb Rv'$ are four distinct lines which are not
  intertwined by Lemma \ref{lemnoncrossing}. Up to exchanging $u,v$
  with $u',v'$, we can suppose that 
  $u',v'$ belong to the open cone $(0,+\infty)u+
  (0,+\infty)v$ spanned by $u$ and $v$. We have thus $u'=\alpha u+\beta v$ and $v'=\gamma u+\delta v$
  with $\alpha,\beta,\gamma,\delta$ strictly positive integers. Reducedness
  of the black monochromatic basis $u,v$ implies that $v+u$ does not belong
  to the black N-NW windmill-cone containing $v$. It is thus either an element
  of the closure of the white N-NE windmill-cone
  or it belongs to the black E-NE windmill-cone containing $u$.

  Suppose first that
  $u+v$ is an element of the closed white N-NE windmill-cone.
  Since $\alpha+\beta$ and $\gamma+\delta$ are both at least
  equal to $2$ and since $u',v'$ belong respectively to the
  black E-NE and the N-NW windmill cones, the element
  $u+v$ belongs to the closed segment joining
  $\frac{2}{\alpha+\beta}u'$ to $\frac{2}{\gamma+\delta}v'$
  contained in the convex hull of $(0,0),u',v'$.
  This is a contradiction by Lemma
  \ref{lemlatticebasis}.

  All lattice points $v+u,v+2u,v+3u,\ldots$ are  
  thus elements of the open black E-NE windmill-cone containing $u$.
  The affine line $L=\gamma u+\mathbb R v$ intersects thus 
  $\Lambda_\mu(p)$ in
  at least two elements $\gamma u,\gamma u+v$ of the E-NE windmill-cone.
  Since $v$ has slope strictly smaller than $-1$, the intersection
  of $L=\gamma u+
  \mathbb R v$ with the white N-NE windmill-cone is strictly longer
  than the intersection of $L$ with the black E-NE windmill-cone.
  The intersection of $L$ with the open white E-NE windmill-cone contains
  thus at least one element of $\Lambda_\mu(p)$.
  This implies $\delta\geq 3$ where
  $$v'=(-d',b')=\gamma u+\delta v=\gamma(a,c)+\delta(-d,b)$$
  and we get $b'=\gamma c+\delta b\geq c+3b$. 

  Since the
  open strip bounded by the two parallel lines $\mathbb R v$ and
  $(a,c)+\mathbb R v$ contains no elements
  of $\Lambda_\mu(p)$ and since $v$ has slope strictly smaller than
  $-1$, an element $(x,y)$
  of $\Lambda_\mu(p)$ contained in the open black E-NE windmill-cone
  satisfies $x\geq a/2$. Applying this to $u'=(a',c')$ we get $a'\geq a/2$.
  We have now
  $$p=a'b'+c'd'>a'b'\geq \frac{a}{2}(c+3b)=ab+a\frac{b+c}{2}>ab+ac>ab+cd=p$$
  ending the proof.
\end{proof}


\begin{proof}[Proof of Theorem \ref{thmmain}]
  Given an odd prime number $p$, we denote by
  $\mathcal S_p$ be the set of all associated
  solutions $(a,b,c,d)$
  defined by Theorem \ref{thmmain}.

  We associate to a solution $(a,b,c,d)$ in $\mathcal S_p$
  the two vectors $u=(a,c),\ v=(-d,b)$
and we consider the sub-lattice
$\Lambda=\mathbb Zu+\mathbb Zv$ of index $p=ab-c(-d)$ in $\mathbb Z^2$ generated by $u$ and $v$.
Since $p$ is prime, there are exactly two solutions with $cd=0$, given by
$(p,1,0,0)$ and $(1,p,0,0)$ corresponding to the
lattices $\mathbb Z(p,0)+\mathbb Z(0,1)$ and
$\mathbb Z(1,0)+\mathbb Z(0,p)$.

We suppose henceforth $cd>0$.
The vectors $u$ and $v$ are contained respectively in
the black E-NE and the
black N-NW windmill-cone and form a reduced black
monochromatic basis of the lattice $\Lambda$.

Sub-lattices of prime-index $p$ in $\mathbb Z^2$
are in bijection with the set of all $p+1$ points on
the projective line $\mathbb P^2\mathbb F_p$ over the finite
field $\mathbb F_p$. More precisely, a point $[a:b]$
of the projective line defines the
lattice
$$\Lambda_{[a:b]}=\{(x,y)\in\mathbb Z^2\ \vert\ ax+by\equiv 0\pmod p\}$$
corresponding to the lattice $\Lambda_\mu$ (defined by
(\ref{deflambdamu})) with $\mu\equiv b/a
\pmod p$ for $a$ invertible.

We have already considered lattices associated to the two solutions
with $cd=0$. The lattices corresponding to
$\mu\equiv\pm 1\pmod p$
have no monochromatic basis and yield thus no solutions.
All $(p-3)$ lattices $\Lambda_\mu$ with $\mu\in\{2,\ldots,p-2\}$
have monochromatic bases by Proposition \ref{propexistencebasis}.

Since $\Lambda_\mu$ and $\Lambda_{p-\mu}$
  (respectively $\Lambda_{\mu^{-1}\pmod p}$) differ by a horizontal
(respectively diagonal) reflection,
they have monochromatic bases of different colours.
Proposition \ref{propmonochr} shows thus that there are
$(p-3)/2$ different values of $\mu$ in $\{2,\ldots,p-2\}$
which give rise to a lattice $\Lambda_\mu$ corresponding to
a solution in $(a,b,c,d)$ in $\mathcal S_p$ with $cd\not=0$.
The set $\mathcal S_p$ contains thus exactly
$(p-3)/2+2=(p+1)/2$ elements.
\end{proof}

\begin{rem} The lattice $\Lambda=\mathbb Z(a,c)+\mathbb Z(-d,b)$
  associated to a solution $(a,b,c,d)$ in $\mathcal S_p$
  has a fundamental domain given by the union
  of the rectangle of size $a\times b$ with vertices $(0,0),
  (a,0),(a,c),(0,c)$ and of the rectangle of size $d\times c$
  with vertices $(a,0),(a+d,0),(a+d,c),(a,c)$.
\end{rem}

  \section{Complements}

  \subsection{Constructing the solution associated to $\pm \mu$
    in $\{2,\ldots,p-2\}$}\label{ssectionsolution}

  Every pair of opposite elements $\pm \mu$ represented by
  an integer $\mu\in\{2,\ldots,p-2\}$ defines exactly
  one solution in $\mathcal S_p$ and all solutions except
  $(p,1,0,0)$ and $(1,p,0,0)$ are of this form.
  The associated solution can be constructed as follows:
  Gau\ss ian lattice-reduction applied to
  $$\Lambda_\mu(p)=\mathbb Z(p,0)+\mathbb Z(-\mu,1)=
  \{(x,y)\ \vert\ x+\mu y\equiv0\pmod p\}$$
  yields a basis containing
  a shortest vector $w$ in $\Lambda_\mu(p)$. Proposition
  \ref{propshortvector} below shows how to deduce from
  this a black monochromatic basis either of $\Lambda_\mu(p)$ or
  of $\Lambda_{-\mu}(p)$. The first part of the proof of Proposition
  \ref{propmonochr} shows how to construct a reduced basis $(a,c),(-d,b)$ (associated to the solution $(a,b,c,d)$ defined by
  $\Lambda_{\pm \mu}(p)$) from a black monochromatic
  basis.
  
  \begin{prop}\label{propshortvector} Given an odd prime number $p$ and $\mu$ in
    $\{2,\ldots,p-2\}$, let $w$ be a shortest non-zero element of
    $\Lambda_\mu(p)$. There exists a monochromatic basis of
    $\Lambda_\mu(p)$ which contains either $w$ or a shortest element
    of $\Lambda_\mu(p)\setminus\mathbb Z w$.
  \end{prop}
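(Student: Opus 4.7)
The plan is to combine Gaussian lattice reduction with the angular rigidity it provides. Starting from any basis of $\Lambda_\mu(p)$, I would first run Gaussian reduction to produce a Gauss-reduced basis $(w,w')$ in which $w$ is a shortest non-zero vector of $\Lambda_\mu(p)$ and $w'$ is a shortest element of $\Lambda_\mu(p)\setminus\mathbb Z w$. Such a basis satisfies $|w|\le|w'|$ and $2|\langle w,w'\rangle|\le|w|^2$, so $|\cos\angle(w,w')|\le 1/2$ and the angle between the lines $\mathbb R w$ and $\mathbb R w'$ is at least $\pi/3$.

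Next I would argue that both $w$ and $w'$ lie in open windmill cones. The area identity $|w||w'|\sin\angle(w,w')=p$ combined with $\sin\angle(w,w')\ge\sqrt{3}/2$ yields $|w||w'|\le 2p/\sqrt{3}$, which together with the trivial bound $|w|^2\ge 2$ forces $|w'|<p$. Hence neither $w$ nor $w'$ sits on one of the four forbidden lines $xy(x^2-y^2)=0$, by the same reasoning used in the proof of Proposition \ref{propexistencebasis}. Since each windmill cone has angular width $\pi/4<\pi/3$, the vector $w'$ cannot share an open cone with $\pm w$, so $\{\pm w,\pm w'\}$ meets four pairwise distinct open cones.

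If these four cones all have the same colour (Case A), they are exactly the four cones of that colour, and $(w,w')$ is already a monochromatic basis containing $w$. Otherwise (Case B), $w$ and $w'$ sit in cones of different colours; here I would replace $w'$ by $w+w'$ or $w-w'$, noting that $(w,w\pm w')$ is still a basis of $\Lambda_\mu(p)$ since the determinant is unchanged up to sign. Reducing by the dihedral symmetry of the windmill to the normal form $w\in$ E-NE (black) and $w'\in$ N-NE or W-NW (white), a direct case analysis driven by the Gauss inequalities and the absence of lattice points on the forbidden lines shows that exactly one of $w+w'$ or $w-w'$ lies in a black cone different from $\pm w$. The resulting pair is the desired monochromatic basis, containing $w$; in the borderline situation where $w\pm w'$ has the same length as $w'$, it also realises a shortest element of $\Lambda_\mu(p)\setminus\mathbb Z w$, so the conclusion holds in either formulation.

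The main obstacle is the Case B case analysis: although the idea---two short, nearly orthogonal vectors produced by Gauss, combined with a $\pm$ shift to steer the second one into the correct colour---is transparent, turning it into a rigorous argument requires careful bookkeeping of signs and an appeal to the exclusion of the four forbidden lines from $\Lambda_\mu(p)$ in the relevant square to rule out degenerate placements.
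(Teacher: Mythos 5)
Your Case~A is fine, and the preparatory steps (Gauss reduction, the $60^\circ$ lower bound on the angle, and the exclusion of the four forbidden lines) are correct. The gap is in Case~B, and it is not a bookkeeping issue: the central claim there --- that when $w$ is black and $w'$ is white, one of $w\pm w'$ lands in a black cone different from $\pm w$, yielding a black monochromatic basis containing $w$ --- is false. Concretely, take $p=53$ and $\mu=33$. Then $\Lambda_{33}(53)$ has strictly shortest vector $w=(7,3)$ (norm $58$, in the black E-NE cone) and second minimum $w'=(-6,5)$ (norm $61$, in the white W-NW cone), with $2|\langle w,w'\rangle|=54<58$, so this is the Gauss-reduced pair. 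Here $w+w'=(1,8)$ lies in the white N-NE cone and $w-w'=(13,-2)$ in the white E-SE cone: neither sum is black. In fact $\{\pm(-6,5),\pm(1,8)\}$ is a \emph{white} monochromatic basis, so by Lemma~\ref{lemonecolour} this lattice has no black monochromatic basis at all, and since $\pm w$ occupies two black cones, no monochromatic basis of any colour can contain $w$. Your strategy of always keeping $w$ in the basis and steering the second vector into $w$'s colour is therefore doomed in principle, not just in execution.

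This is exactly why the proposition is stated with the disjunction ``either $w$ or a shortest element of $\Lambda_\mu(p)\setminus\mathbb Z w$'': in the bad case you must discard $w$ entirely. (Your closing remark, which reads the ``or'' branch as a tie-breaking borderline where $|w\pm w'|=|w'|$, shows the misreading.) The paper's proof handles this by normalising $w$ into the black E-NE cone and looking at the lattice line $L_+$ parallel to $\mathbb R w$ and closest to it from above: if $L_+$ meets the black N-NW cone in a lattice point $r$, then $(w,r)$ is a black basis containing $w$; otherwise the two lattice points of $L_+$ flanking that cone lie in the two adjacent white cones, form a white monochromatic basis, and one of them (being separated from the other by the cone containing the perpendicular to $\mathbb R w$) realises the second minimum. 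To repair your argument you would need to add this second mechanism, i.e.\ a construction of a white basis from $w'$ and $w'\pm w$ (in the example above, $\{w',w'-w\}$ does the job), together with a proof that it covers every configuration your Case~B fails on.
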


  \begin{proof} After a rotation by a suitable angle
    $k\pi/2$ and perhaps a horizontal
    reflection, we end up with a lattice $\Lambda$ having a shortest
    non-zero element $w$ in the open black E-NE windmill-cone. 
    Let $L_+$ be the closest affine line above $\mathbb Rw$ which is
    parallel to $\mathbb R w$ and intersects $\Lambda\setminus\mathbb Z w$.
    If the intersection of $L_+$ with the open black N-NW windmill-cone
    contains an element $r$ of $\Lambda$, we get a black monochromatic
    basis by considering $w,r$.

    Otherwise an easy geometric argument shows that $L_+$ intersects $\Lambda$ in a rightmost point $v$ of the
    open white W-NW windmill-cone and in a leftmost point $u$ of the
    open white N-NE windmill-cone and we get a white monochromatic basis
    by considering $u,v$. Since $u,v$ are separated by the black N-NW
    windmill-cone containing the orthogonal line to $\mathbb R w$,
    either $u$ or $v$ is a shortest element of $\Lambda\setminus \mathbb Z w$.
  \end{proof}
  
  \subsection{Projective statistics}

  We say that a function $f:\mathbb N^4\setminus \{0,0,0,0)\}
  \longrightarrow \mathbb R$
  defines a \emph{projective statistic} if $f(\lambda a,\lambda b,\lambda c,\lambda d)=f(a,b,c,d)$ for all $\lambda\geq 1$
  (i.e. if $f$ factorises through the projection of $\mathbb N^4$
  into $\mathbb P^3\mathbb R$). Interesting examples when
  studying the sets $\mathcal S_p$ of solutions to Theorem
  \ref{thmmain} are perhaps
  $$\frac{c+d}{a+b},\frac{cd}{ab},\frac{c+d}{\sqrt{a b}},
  \frac{\min(c,d)}{\max(a,b)},\frac{\min(a,b)}{\max(a,b)},\
  \mathrm{ etc.}.$$
  We assume henceforth $f$ continuous on (an open set of) $\mathbb P^3\mathbb R$
  and we are interested
  in the asymptotic (with respect to $p\rightarrow \infty)$)
  proportion $\mu_f(\Omega)$
  of elements in $\mathcal S_p$
  given by the preimage $f^{-1}(\Omega)\subset \mathcal S_p$
  of an open set $\Omega$ in $\mathbb R$. (Equivalently, one can
  also consider the asymptotic proportion of all elements in
  $\mathcal S_p$ projecting on an open set $\mathcal O$ of
  $\mathbb P^3\mathbb R$. Observe however that the closure of 
  $\cup_p\mathcal S_p$ avoids most of $\mathbb P^3\mathbb R$.)

  If $\mu_f(\Omega)$ exists (which should be the case for
  all reasonable continuous projective statistics $f$)
  the probability measure $\mu_f(\Omega)$ is perhaps
  equal to an integral explained below.

  The value $\mu_f(\Omega)$ can of course also be approximated
  almost surely, either by
  computing the set $f^{-1}(\Omega)\subset \mathcal S_p$
  for a large prime $p$, or by choosing a large number of
  pairs $(p_i,\mu_i)$
  with $p_i$ large primes and $\mu_i$ chosen uniformly
  among $\{2,\ldots,p-2\}$ leading to a lattice $\Lambda_{\pm \mu_i}(p_i)$ having a reduced black basis and estimating $\mu_f(\Omega)$
  as the proportion of choices which lead to solutions $(a,b,c,d)$
  (computed using
  for example Section \ref{ssectionsolution})
  in $\mathcal S_{p_i}$ with $f(a,b,c,d)$ in $\Omega$.

  We are now going to explain a computation
  of $\mu_f(\Omega)$ which is exact under an assumption of equidstribution.
  The famous modular curve $\mathcal M=\mathbb H/\mathrm{PSL}_2(\mathbb Z)$
  is the moduli space for rank 2 lattices in $\mathbb C$, up
  to orientation-preserving similitudes. An obvious quotient $\tilde U$
  of the unitary tangent bundle of $\mathcal M$ corresponds to
  sub-lattices of $\mathbb C$ up to positive real scalings
  (or equivalently to geodesics of the orbifold
  $\mathcal M$ containing a marked point: Given a sublattice $\Lambda$ of
  $\mathbb C$ with shortest non-zero vector $w$ consider the corresponding
  point on the standard fundamental domain for
  $\mathcal M$ together with the geodesic having the slope
  of $w$ at this point).
  It can thus be identified with the set of all sub-lattices of $\mathbb C$ with a given determinant. $\mathcal U$ has a natural finite
  probability measure $\nu_{\mathcal U}$.
  We denote by $\mathcal U_b$, respectively
  by $\mathcal U_w$ the subset of all elements of $\mathcal U$
  corresponding to sub-lattices of $\mathbb C$ having a black, respectively white, monochromatic basis. The complement $\mathcal U\setminus(\mathcal U_b\cup\mathcal U_w)$ is of measure zero and can
  be neglected. The function $f$ defines now a continuous function
  $\tilde f$ on an open subset of $\mathcal U_b$.
  Assuming asymptotical
  equidistribution in $\mathcal U$ with respect to $\nu_{\mathcal U}$ of
  sub-lattices of prime index $p$ in $\mathbb Z^2$, we have
  $\mu_f(\Omega)=\nu_{\mathcal U}(\tilde f^{-1}(\Omega))/\nu_{\mathcal U}(\mathcal U_B)$ (where $\nu_{\mathcal U}(\mathcal U_b)=1/2$
  if $\nu_{\mathcal U}$ is a probability measure with total measure
  $1$ on $\mathcal U$). This reduces the computation of
  $\mu_f(\Omega)$ to an integration (of a complicated function
  on a complicated subset of $\mathcal U$).
  
  \subsection{Variations}

  One can of course also consider the equation $n=ab+cd$ for arbitrary
  $n$. There are two possibilities when $n=ab$: either require $c=d=0$
  or accept solutions with $cd=0$ but $c+d\in\{0,1,\ldots,\min(a,b)-1\}$.
  Both problems can be
  solved by the techniques of this paper, up to technicalities.

  Also interesting is the equation $n=ab-cd$ with $\min(a,b)>\max(c,d)$.
  The number of solutions in $\mathbb N^4$ can be shown to be
  $$\sum_{d\vert n,\ d^2\geq n}\left(d+1+\frac{n}{d}\right)\ .$$
  Details will hopefully be provided ulteriorly in another paper.

\noindent Roland BACHER, 

\noindent Univ. Grenoble Alpes, Institut Fourier, 

\noindent F-38000 Grenoble, France.
\vskip0.5cm
\noindent e-mail: Roland.Bacher@univ-grenoble-alpes.fr

\end{document}